\documentclass[11pt]{article}

\usepackage[backend=bibtex,maxbibnames=9,firstinits=true,doi=false,isbn=false,url=false]{biblatex}
\DeclareNameAlias{sortname}{last-first}
\renewbibmacro{in:}{}

\usepackage{fullpage}
\usepackage[T1]{fontenc}
\usepackage{hyperref}
\usepackage{xcolor}
\definecolor{link-color}{rgb}{0.15,0.4,0.15}
\hypersetup{
  colorlinks,
  linkcolor = {link-color}, citecolor = {link-color},
}
\usepackage{graphicx}
\usepackage[font=small,labelfont=bf]{caption}
\usepackage{subcaption}
\usepackage{amsmath,amsthm,amssymb}
\usepackage{bbm}

\newtheorem{theorem}{Theorem}[section]

\newtheorem{lemma}[theorem]{Lemma}
\theoremstyle{plain}
\newtheorem{remark}[theorem]{Remark}
\newtheorem{definition}[theorem]{Definition}


\newcommand{\N}{\mathbb{N}}
\renewcommand{\P}{\mathbb{P}}
\newcommand{\E}{\mathbb{E}}
\newcommand{\1}{\mathbbm{1}}

\newcommand{\eps}{\varepsilon}

\title{Asymptotic number of caterpillars of regularly varying \texorpdfstring{$\Lambda$}{Lambda}-coalescents that come down from infinity}
\author{Bat\i{} \c{S}eng\"ul}

\bibliography{references.bib}

\begin{document}
\maketitle
\begin{abstract}
  In this paper we look at the asymptotic number of $r$-caterpillars for $\Lambda$-coalescents which come down from infinity, under a regularly varying assumption.
  An $r$-caterpillar is a functional of the coalescent process started from $n$ individuals which, roughly speaking, is a block of the coalescent at some time, formed by one line of descend to which $r-1$ singletons have merged one by one.
  We show that the number of $r$-caterpillars, suitably scaled, converge to an explicit constant as the sample size $n$ goes to $\infty$.
\end{abstract}
\section{Introduction and results}
A coalescent process is a particle system in which particles merge into blocks.
Coalescent processes have found a variety of applications in physics, chemistry and most notably in genetics where the coalescent process models ancestral relationships as time runs backwards.
The work on coalescent theory dates back to the seminal paper~\cite{kingman} where Kingman considered coalescent processes with pairwise mergers.
This was extended by \textcite{pitmanlambdacoal}, \textcite{sagitovlambdacoal} and \textcite{dk99}, to the case where multiple mergers are allowed to happen.

Let $\Lambda$ be a finite measure on $[0,1]$.
The $\Lambda$-coalescent $\Pi=(\Pi(t):t \geq 0)$ is a Markov process which takes values in the set of partitions of $\N$, which starts from $(\{1\},\dots)$ and evolves forwards in time by merging together several blocks into one block.
Such processes are characterised by the rates $\lambda_{b,k}$ at which $k$ fixed blocks coalesce into one block when the current state has $b$ blocks in total, that are given by
\[
  \lambda_{b,k}=\int_0^1p^{k-2}(1-p)^{b-k}\,\Lambda({\rm d}p).
\]
We refer to \textcite{berestyckibook} and \textcite{bertoinfragment} for an overview of the field.

A finite measure $\Lambda$ is said to be strongly regularly varying, SRV$(\alpha)$, with index $\alpha \in (0,2)$ if $\Lambda(dp)=f(p)\,dp$ and there exists a constant $A_\Lambda>0$ such that
\begin{equation}\label{eq:regular_vary}
  \lim_{p\downarrow 0}\frac{f(p)}{p^{1-\alpha}}= A_\Lambda.
\end{equation}
We extend the definition to include $\alpha=2$ by saying that $\Lambda$ is SRV$(2)$ when $\Lambda = \delta_{\{0\}}$.
$\Lambda$-coalescents, when $\Lambda$ is SRV$(\alpha)$, cover many important classes of coalescent processes such as Kingman's coalescent ($\alpha=2$), Bolthausen-Sznitman coalescent and Beta$(2-\alpha,\alpha)$-coalescents.
In this paper, we shall additionally restrict ourselves to the case when $\alpha\in(1,2]$ which is the case when the coalescent comes down from infinity.

\begin{figure}
  \centering
  \begin{subfigure}[b]{0.18\textwidth}
    \includegraphics[width=\textwidth]{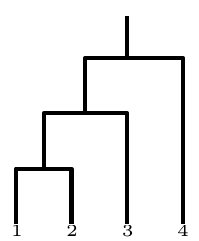}
  \end{subfigure}%
  \hspace{90pt}
  \begin{subfigure}[b]{0.18\textwidth}
    \includegraphics[width=\textwidth]{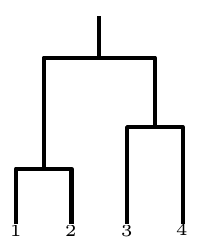}
  \end{subfigure}
  \caption{
  On the left is a coalescent tree with one $2$-caterpillar $\{1,2\}$, one $3$-caterpillar $\{1,2,3\}$ and one $4$-caterpillar $\{1,2,3,4\}$.
  The picture on the right consists of two $2$-caterpillars, $\{1,2\}$ and $\{3,4\}$, note however that $\{1,2,3,4\}$ in this picture is not a $4$-caterpillar.
}\label{fig:pronged_caterpillar}
\end{figure}

In evolutionary biology, an important task is to determine which coalescent process underlies a given data set.
To do this, it is useful to compute functionals of coalescent processes that are easy to check against a data set.
In this paper we study the functional known in the biology literature as $r$-caterpillars (in the case $r=2$, this is sometimes referred to as cherries).  
Loosely speaking, an $r$-caterpillar is a block of the coalescent at some time, formed by one line of descend to which $r-1$ singletons have merged with one by one, see Figure~\ref{fig:pronged_caterpillar} for an illustration.
To make this definition rigorous, we first introduce some notation.
For $n\in \N$, let $\Pi^{(n)}$ be the restriction of $\Pi$ to $\{1,\dots,n\}$.
We order the blocks of a partition by infimum and for $i \leq n$ and $t \geq 0$, let $c_t(i)$ be the number of the block of $\Pi^{(n)}(t)$ which contains $i$, so that for every $t \geq 0$, $i\in \Pi^{(n)}_{c_t(i)}(t)$.

\begin{definition}
  For $r \in \{1,\dots,n\}$, a set $B \subset \{1,\dots,n\}$ is called an $r$-caterpillar if $|B|=r$ and there exists a $t\geq 0$ such that
  \begin{itemize}
    \item $B$ is a block of $\Pi^{(n)}(t)$,
    \item there exists an $i\in B$ such that the function $s \mapsto |\Pi^{(n)}_{c_s(i)}(s)|$, for $s \in [0,t]$, has jumps of size one.
  \end{itemize}
\end{definition}

In the case $r=1$, the $1$-caterpillars are precisely $\{1\},\dots,\{n\}$.
Notice that the number of $r$-caterpillars only depends on the shape of the coalescent and are invariant under time-changes.

The main result of this paper gives asymptotic number of $r$-caterpillars of SRV$(\alpha)$ coalescent processes, as $n$ tends to $\infty$.

\begin{theorem}\label{thm:cherry_conv}
  Let $\Lambda$ be a finite SRV$(\alpha)$ measure with $\alpha\in (1,2]$ and let $\Pi^{(n)}$ be the restriction to $\{1,\dots,n\}$ of the $\Lambda$-coalescent $\Pi$.
  For $r \in \{2,\dots\}$ let $\xi_r^{(n)}$ denote the number of $r$-caterpillars associated to $\Pi^{(n)}$, then almost surely
      \[
        \lim_{n \rightarrow \infty} \frac{1}{n}\xi_r^{(n)} = \frac{\alpha^{r-1}}{2} \frac{\Gamma\left(1+\frac{\alpha}{\alpha-1}\right)}{\Gamma\left(r+\frac{\alpha}{\alpha-1}\right)}.
      \]
\end{theorem}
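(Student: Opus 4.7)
My plan is to reduce $\xi_r^{(n)}$ to a sum of one-point indicators by exchangeability, compute the limiting probability of each indicator by following the coalescent dynamics around a tagged individual, and then upgrade convergence in expectation to almost sure convergence via a variance estimate combined with exchangeability.

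For the reduction, observe that for $r\geq 2$ every $r$-caterpillar has exactly two members satisfying the jumps-of-size-one condition, namely the two individuals joined at the innermost cherry of the caterpillar subtree. Writing $E_r(i)$ for the event that the first $r-1$ merger events experienced by the block of individual $i$ are each pairwise mergers in which the other merging block is a singleton, one has
\[
  \xi_r^{(n)} = \frac{1}{2}\sum_{i=1}^{n}\1_{E_r(i)}.
\]
By exchangeability it then suffices to show that $\P(E_r(1))\to 2c_{r,\alpha}$, where $c_{r,\alpha}$ denotes the constant in the theorem statement.

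The core of the argument is the analysis of $E_r(1)$ by following the block containing individual $1$ through its first $r-1$ mergers. Under the SRV$(\alpha)$ assumption a Beta-integral computation gives $\lambda_{b,k}\sim A_\Lambda\Gamma(k-\alpha) b^{\alpha-k}$ as $b\to\infty$ for each fixed $k\geq 2$, and the identity $\sum_{j\geq 1}\Gamma(j+1-\alpha)/j! = \Gamma(2-\alpha)/(\alpha-1)$ gives that the total rate at which a tagged block participates in some merger is asymptotic to $A_\Lambda\Gamma(2-\alpha) b^{\alpha-1}/(\alpha-1)$, while the rate of that block participating in a pairwise merger with one of the $S_b$ singleton blocks is asymptotic to $A_\Lambda\Gamma(2-\alpha) S_b b^{\alpha-2}$. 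Consequently, conditional on $1$'s block next undergoing a merger, the probability that this merger is a ``good'' caterpillar step tends to $(\alpha-1)S_b/b$. Iterating through the $r-1$ tagged mergers and performing a drift analysis of the joint evolution of block-count and singleton-fraction, conditional on past successful steps, should identify per-step limits $q_j = \alpha(\alpha-1)/((j+1)\alpha-j)$, whose product telescopes via $(j+1)\alpha-j = (\alpha-1)(j+\alpha/(\alpha-1))$ into the ratio of Gamma functions appearing in the theorem. The main technical hurdle is precisely this conditional drift analysis: conditioning on previous successes biases the joint distribution of block-count and singleton-count at the tagged times, and this bias must be handled carefully, ideally by exploiting the restriction-consistency of $\Lambda$-coalescents to reduce the conditional question to an unconditional singleton-fraction computation on the complement of the lineages already absorbed into $1$'s growing block.

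To promote convergence in expectation to almost sure convergence, I would estimate $\Var(\xi_r^{(n)}/n)$ by controlling the covariances $\P(E_r(i)\cap E_r(j))-\P(E_r(i))\P(E_r(j))$ for distinct $i,j$. Two tagged individuals on separately growing caterpillar chains interact only through the common shrinking background of block-count, so these covariances should be $o(1)$ uniformly in $i\neq j$, yielding $\Var(\xi_r^{(n)}/n)=O(1/n)$. Combining Chebyshev's inequality along a polynomial subsequence with a coupling to the infinite coalescent on $\N$ (so that the events $E_r(i)$ stabilise as $n\to\infty$ since the first few partner blocks of a fixed individual eventually lie inside $\{1,\dots,n\}$), and invoking de~Finetti together with the exchangeability of $\Pi$, would then deliver the stated almost sure convergence.
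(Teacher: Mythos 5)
Your route is genuinely different from the paper's: instead of proving a hydrodynamic limit for the whole vector of caterpillar counts at time $t$ (drift and variance estimates, Gronwall, induction on $r$, then integration of the limiting ODEs over $t$), you decompose $\xi_r^{(n)}$ over tagged individuals and aim at a first/second moment computation. The opening reduction is correct and is a nice observation: a jump of size one in $s\mapsto|\Pi^{(n)}_{c_s(i)}(s)|$ forces a pairwise merger with a singleton, and exactly the two members of the innermost cherry witness a given $r$-caterpillar, so $\xi_r^{(n)}=\tfrac12\sum_{i}\1_{E_r(i)}$. Your per-step constants also do telescope to the stated limit, since $(j+1)\alpha-j=(\alpha-1)\bigl(j+\tfrac{\alpha}{\alpha-1}\bigr)$. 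Nevertheless two steps contain genuine gaps.

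First, the evaluation of $\P(E_r(1))$ is not a self-contained "tagged particle" computation: the good-step probability at a tagged merger is $(\alpha-1)Y_1/Y_0$ evaluated at a random time, so you need the uniform laws of large numbers for the block count and the singleton count along the whole trajectory --- that is, precisely the $r=0,1$ cases of the paper's Theorem~\ref{thm:cherries_at} --- together with an argument that conditioning on the first $j$ tagged mergers being good does not bias these macroscopic quantities. Moreover the conditional step probabilities are not constants: given the first good merger occurred at rescaled time $t_1$, the second step succeeds with probability $\tfrac{\alpha(\alpha-1)}{2\alpha-1}(1+t_1)^{-1}$, and only after integrating against the joint law of the tagged merger times does the product $\prod_j q_j$ emerge. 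You correctly flag this as the main hurdle, but closing it amounts to redoing most of the paper's Sections~\ref{sec:rates}--\ref{sec:at}, so as written the central limit computation is asserted rather than proved.

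Second, the almost sure upgrade fails as described. For $\alpha\in(1,2]$ the coalescent comes down from infinity, so in $\Pi$ on $\N$ the singleton $\{i\}$ is absorbed into an infinite block instantaneously; in $\Pi^{(n)}$ the time of the first merger of $\{i\}$ is of order $n^{1-\alpha}\to 0$ and the partner blocks change with $n$. Hence $\1_{E_r(i)}$ does \emph{not} stabilise as $n\to\infty$, there is no limiting event to couple to, and de~Finetti gives nothing here. A bound $\Var(\xi_r^{(n)}/n)=O(1/n)$ plus Chebyshev yields almost sure convergence only along a subsequence such as $n_k=k^2$; to interpolate you would need, for instance, a deterministic bound $|\xi_r^{(n+1)}-\xi_r^{(n)}|\leq C_r$ (plausible, since adding one leaf perturbs only the chain of blocks containing it, but this must be proved), or a concentration inequality with summable tails. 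The asserted uniform $o(1)$ covariance bound likewise rests on the same unproved conditional hydrodynamic control, now for two tagged individuals simultaneously.
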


In the case of Kingman's coalescent ($\alpha=2$), Theorem~\ref{thm:cherry_conv} states that $\lim_{n \rightarrow \infty} \xi_r^{(n)}/n = 2^{r-1}/{(r+1)!}$ almost surely.
This agrees with the results in the literature~\cite{MR1747204,bio_exact,rosenberg_mean} where exact formulas of the expectation and variance for finite $n$ are known.
In the case of the Beta-coalescents, several related statistics have appeared in the literature, see for example~\cite{MR3224287,MR3027896,MR3025690,miller2016hydrodynamic,MR3466919}.

Let us briefly discuss the case when the index of regular variation lies in $(0,1]$.
In the case when $\alpha=1$ we suspect that similar arguments in this paper can be used to show that
\[
  \lim_{n\to\infty} \frac{(\log n)^r}{n}\xi_r^{(n)} = \frac{1}{r(r-1)(r-2)}
\]
almost surely.
The case when $\alpha\in (0,1)$ our methods fail because the limiting objects are no longer deterministic.
In this case we suspect that the number of $r$-caterpillars, when properly scaled, converge to an exponential integral of a subordinator and in future work we hope to explore this.

\section{Outline of the proof and the paper}\label{sec:outline}

We will reveal the $r$-caterpillars associated to $\Pi^{(n)}$ by exploring these thought time.
A caterpillar seen \emph{up to} time $t$ is a caterpillar $B\subset \{1,\dots,n\}$ which appears as a block of $\Pi^{(n)}(s)$ for some $s\leq t$.
The number of $r$-caterpillars seen up to time $t$ is increasing in $t$ and converges to $\xi^{(n)}_r$ as $t\to\infty$.
A caterpillar seen \emph{at} time $t$ is a caterpillar $B$ which is a block of $\Pi^{(n)}(t)$.
The number of $r$-caterpillars up to time $t$ increases, if a singleton ($1$-caterpillar) at time $t$ merges with an $(r-1)$-caterpillar at time $t$.
We look at a process which records the number of $\ell$-caterpillars at time $t$, for all $\ell\leq r-1$, and show that when suitably scaled, this process converges to the solution of a series of simultaneous ODEs (which we can solve).
After establishing this convergence, we use a simple argument to then show the convergence of the number of $r$-caterpillars up to time $t$ and then take $t\uparrow \infty$ to show Theorem~\ref{thm:cherries_at}.

The paper is organised as follows.
Then in Section~\ref{sec:rates} we use the regularly varying assumption to prove some lemmas about  the rate of mergers.
In Section~\ref{sec:at}, using the estimates we have obtained in the previous section, we show an auxiliary theorem about the convergence of the number of caterpillars at height $t$.
Finally in Section~\ref{sec:proof} we prove Theorem~\ref{thm:cherry_conv} by using the auxiliary theorem.

\section{Estimates on the rates}\label{sec:rates}

In this section we provide some estimates on the rates which will prove useful throughout the paper.
The limiting results for various rates have appeared in the literature, for example in~\cite[equation(10)]{berestyckismall},~\cite[Lemma 4]{bertoinflows3}.
The aim of this section is to obtain these convergences in a uniform way.

Thoughout this section suppose that $\Lambda$ is a finite SRV$(\alpha)$ measure with $\alpha\in (0,2)$.
Although later on we only use the case when $\alpha\in (1,2)$, we nevertheless show the identities in generality.
Note that when $\alpha=2$, we have that $\lambda_{b,k}=\1_{\{k=2\}}$ and similar results to those given here follow easily.

We begin with the following estimate on the rates.

\begin{lemma}\label{lemma:zeta_convergence}
  Suppose that $\alpha \in (0,2)$. Then for each $\eps>0$, there exists a $p\in(0,1)$ such that for each $k \in \{2,\dots,b\}$,
  \[
    \left|\binom{b}{k} \frac{\lambda_{b,k}}{b^\alpha}- A_\Lambda\frac{\Gamma(k-\alpha)}{\Gamma(k+1)}\right| \leq C(\eps+ b^{-1})\frac{\Gamma(k-\alpha)}{\Gamma(k+1)}+ C \frac{p^{b-k-2}}{b^\alpha},
  \]
  where the constant $C>0$ depends only on the measure $\Lambda$. 
\end{lemma}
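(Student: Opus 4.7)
The plan is to split the defining integral of $\lambda_{b,k}$ at a small cut-off $p_0=p_0(\eps)$, replace $f(p)$ by $A_\Lambda p^{1-\alpha}$ on $[0,p_0]$, and extract the main term via a Beta-function computation. Using \eqref{eq:regular_vary} to pick $p_0\in(0,1)$ small enough that $|f(p)-A_\Lambda p^{1-\alpha}|\leq \eps A_\Lambda p^{1-\alpha}$ for every $p\in(0,p_0]$, I would write
\[
\lambda_{b,k}=A_\Lambda\int_0^1 p^{k-1-\alpha}(1-p)^{b-k}\,dp + R_1 + R_2 + R_3,
\]
where $R_1=\int_0^{p_0}p^{k-2}(1-p)^{b-k}\bigl[f(p)-A_\Lambda p^{1-\alpha}\bigr]dp$ is the regular-variation error, $R_2=-A_\Lambda\int_{p_0}^1 p^{k-1-\alpha}(1-p)^{b-k}\,dp$ compensates for having extended the main integral to all of $[0,1]$, and $R_3=\int_{p_0}^1 p^{k-2}(1-p)^{b-k}f(p)\,dp$ is the tail of the $f$-integral.

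The leading piece equals $A_\Lambda B(k-\alpha,b-k+1)=A_\Lambda\Gamma(k-\alpha)\Gamma(b-k+1)/\Gamma(b-\alpha+1)$, so $\binom{b}{k}/b^\alpha$ times it simplifies to $A_\Lambda\Gamma(k-\alpha)/\Gamma(k+1)\cdot\Gamma(b+1)/(b^\alpha\Gamma(b-\alpha+1))$. The standard asymptotic $\Gamma(b+1)/\Gamma(b-\alpha+1)=b^\alpha(1+O(b^{-1}))$, with a universal implied constant, then reproduces the target main term up to a relative $O(b^{-1})$ error, supplying the $Cb^{-1}\Gamma(k-\alpha)/\Gamma(k+1)$ piece of the bound. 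The regular-variation error satisfies $|R_1|\leq \eps A_\Lambda B(k-\alpha,b-k+1)$, so that its contribution to $\binom{b}{k}\lambda_{b,k}/b^\alpha$ is at most $\eps A_\Lambda\Gamma(k-\alpha)/\Gamma(k+1)(1+O(b^{-1}))$, supplying the $C\eps\,\Gamma(k-\alpha)/\Gamma(k+1)$ piece.

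What remains is to bound $\binom{b}{k}|R_2|/b^\alpha$ and $\binom{b}{k}R_3/b^\alpha$ by $Cp^{b-k-2}/b^\alpha$ for some $p\in(0,1)$ independent of $b$ and $k$. My two essential ingredients are (i) the pointwise inequality $\binom{b}{k}p^k(1-p)^{b-k}\leq 1$ (it is a binomial probability mass), which after dividing by $p^2$ gives $\binom{b}{k}p^{k-2}(1-p)^{b-k}\leq p_0^{-2}$ on $[p_0,1]$ and hence a uniform constant bound on each tail; and (ii) for $b-k\geq 2$, the factorisation $(1-p)^{b-k}=(1-p)^{b-k-2}(1-p)^2$ together with $(1-p)^{b-k-2}\leq(1-p_0)^{b-k-2}$ on $[p_0,1]$ and the finiteness of $\int_0^1(1-p)^2 f(p)\,dp$ and $\int_0^1 p^{1-\alpha}(1-p)^2\,dp$, which extracts an exponentially decaying factor in $b-k$. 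Choosing $p\in(1-p_0,1)$ so that $(1-p_0)/p<1$ and combining (i) and (ii) then yields the claimed decay; the exceptional cases $k\in\{b-1,b\}$ with $b-k<2$ are handled by direct computation. The main obstacle is precisely this uniformity in $k$: ingredient (i) is a $k$-independent constant while (ii) carries the factor $(1-p_0)^{b-k-2}$ that must be balanced against $\binom{b}{k}$, so the two estimates have to be combined with care — for instance by splitting into regimes according to whether $k$ is close to or far from $b$ — before summing the three contributions gives the stated inequality.
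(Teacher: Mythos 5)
Your overall architecture differs from the paper's only in that the paper does not prove the key inequality at all: it quotes equation (23) of the author's earlier work for the bound on $\bigl|\binom{b}{k}\lambda_{b,k}-A_\Lambda\tfrac{\Gamma(k-\alpha)\Gamma(b+1)}{\Gamma(k+1)\Gamma(b+1-\alpha)}\bigr|$ and then only adds the Stirling step $\Gamma(b+1)/\Gamma(b+1-\alpha)=b^\alpha(1+O(b^{-1}))$ and a triangle inequality. Your splitting at $p_0$, the Beta-function identification of the main term, the treatment of $R_1$, and the Stirling conversion are all fine and are exactly what one would expect to underlie the cited inequality. So you have correctly located the entire content of the lemma in the step you call ``the main obstacle''.

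That step, however, is a genuine gap, and not one that more careful regime-splitting can close, because the inequality you are aiming for there is false in the bulk regime $k\asymp cb$, $c\in(0,1)$. Take $k=\lfloor b/2\rfloor$. Writing $h=f-A_\Lambda p^{1-\alpha}$, you need $\binom{b}{k}\int_{p_0}^1 p^{k-2}(1-p)^{b-k}|h(p)|\,\d p\leq Cp^{b-k-2}$, with the right-hand side exponentially small in $b$. But $\binom{b}{k}p^{k}(1-p)^{b-k}=\P(\mathrm{Bin}(b,p)=k)$ is of order $b^{-1/2}$ for $p$ within $O(b^{-1/2})$ of $1/2$, so if $|h|$ is bounded below on a neighbourhood of $1/2$ (which regular variation at $0$ in no way prevents --- it already happens for $f(p)=p^{1-\alpha}+\1_{[1/4,3/4]}(p)$, and the Beta$(2-\alpha,\alpha)$ density exhibits the same phenomenon) the left-hand side is bounded below by a constant times $1/b$. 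A polynomially small quantity cannot be bounded by an exponentially small one, so no interpolation between your ingredient (i), which only yields the $O(1)$ bound $p_0^{-2}\Lambda[p_0,1]$, and your ingredient (ii), which costs the factor $\binom{b}{k}\leq 2^b$, can succeed. For $k$ proportional to $b$ the tail is genuinely of the same order as the main term $\Gamma(k-\alpha)/\Gamma(k+1)\cdot b^{\alpha}\asymp 1/b$ itself, so it can only be absorbed into an error of the form $C\,\Gamma(k-\alpha)/\Gamma(k+1)$ \emph{without} the prefactor $\eps+b^{-1}$ (which, one should note, still suffices for every subsequent use of the lemma in the paper, since Lemmas~\ref{lemma:total_rates}--\ref{lemma:square_rate} all sum over $k$ and the bulk then contributes $o(1)$); exponential decay in $b-k$ is only available when $k/b$ stays below $p_0$, where a large-deviation bound on $\P(\mathrm{Bin}(b,p)=k)$ for $p\geq p_0$ does the job. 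As written, your proposal asserts a bound that fails, so the proof cannot be completed along the stated lines.
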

\begin{proof}
  Fix $\alpha\in (0,2)$ and $\eps>0$. It follows from simple computations (see equation (23) in~\cite{tangent_cones}) that there exists a $p\in (0,1)$ such that
  \[
    \left|\binom{b}{k} \lambda_{b,k}- A_\Lambda\frac{\Gamma(k-\alpha)\Gamma(b+1)}{\Gamma(k+1)\Gamma(b+1-\alpha)}\right| \leq \eps A_\Lambda\frac{\Gamma(k-\alpha)\Gamma(b+1)}{\Gamma(k+1)\Gamma(b+1-\alpha)}+ p^{b-k-1}(\Lambda[0,1]+A_\Lambda p^{1-\alpha}).
  \]
  On the other hand by Striling's approximation there exists a constant $C>0$ such that
  \[
    \left| b^{-\alpha} \frac{\Gamma(b+1)}{\Gamma(b+1-\alpha)} -1\right | \leq C b^{-1}.
  \]
  Thus by the triangle inequality
  \begin{align*}
    \left|\binom{b}{k} \frac{\lambda_{b,k}}{b^\alpha}- A_\Lambda\frac{\Gamma(k-\alpha)}{\Gamma(k+1)}\right| & \leq  b^{-\alpha}\left|\binom{b}{k} \lambda_{b,k}- A_\Lambda\frac{\Gamma(k-\alpha)\Gamma(b+1)}{\Gamma(k+1)\Gamma(b+1-\alpha)}\right| \\
                                                                                                            &\qquad\qquad+ A_\Lambda\frac{\Gamma(k-\alpha)}{\Gamma(k+1)}\left| b^{-\alpha} \frac{\Gamma(b+1)}{\Gamma(b+1-\alpha)} -1\right | \\
                                                                                                            & \leq \eps A_\Lambda b^{-\alpha}\frac{\Gamma(k-\alpha)\Gamma(b+1)}{\Gamma(k+1)\Gamma(b+1-\alpha)} + b^{-\alpha}p^{b-k-1}(\Lambda[0,1]+ A_\Lambda p^{1-\alpha})\\
                                                                                                            &\qquad\qquad+ CA_\Lambda \frac{\Gamma(k-\alpha)}{\Gamma(k+1)}b^{-1}\\
    & \leq C_1 \eps \frac{\Gamma(k-\alpha)}{\Gamma(k+1)} + C_2 b^{-\alpha} p^{b-k-\alpha} + C_3 b^{-1}\frac{\Gamma(k-\alpha)}{\Gamma(k+1)}.
  \end{align*}
  for some constants $C_1,C_2,C_3>0$, where we have used the fact that $p^{b-k-1}(\Lambda[0,1]+ A_\Lambda p^{1-\alpha}) \leq C_2 p^{b-k-2}$.
\end{proof}

Lemma~\ref{lemma:zeta_convergence} immediately implies the following lemma.

\begin{lemma}\label{lemma:lambda_unif}
  For every $\alpha\in (0,2)$ and $k \in \N$ fixed,
  \[
    \lim_{b \to \infty}\max_{x \in \{2/b,\dots,b/b\}} \left|b^{k-\alpha} \lambda_{bx,k} -A_\Lambda\frac{\Gamma(k-\alpha)}{k}x^{\alpha-k} \right|=0.
  \]
\end{lemma}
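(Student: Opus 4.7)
The plan is to deduce the lemma as an immediate corollary of Lemma~\ref{lemma:zeta_convergence} by the substitution $b \mapsto m := bx$, coupled with a Stirling estimate on the combinatorial prefactor.

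Given $\eps > 0$, let $p \in (0,1)$ be as supplied by Lemma~\ref{lemma:zeta_convergence}. For each $m \in \{k, k+1, \ldots, b\}$, I would apply that lemma with $b$ replaced by $m$ and multiply through by $m^\alpha/\binom{m}{k}$ to isolate $\lambda_{m,k}$. Stirling's formula yields $m^\alpha/\binom{m}{k} = k!\, m^{\alpha-k}(1 + O(m^{-1}))$ uniformly in $m \geq k$, with implicit constant depending only on $k$; using $\Gamma(k+1)=k!$, the estimate reduces to the form
\[
\left|\lambda_{m,k} - c_k\, m^{\alpha-k}\right| \leq C\bigl((\eps + m^{-1})\, m^{\alpha-k} + p^{m-k}\, m^{-k}\bigr),
\]
where $c_k$ denotes the constant appearing on the right-hand side of the statement and $C$ depends only on $\Lambda$ and $k$. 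Multiplying by $b^{k-\alpha}$ and substituting $m = bx$ (so that $b^{k-\alpha} m^{\alpha-k} = x^{\alpha-k}$) then produces the claimed asymptotic pointwise in $x$.

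To upgrade this to a uniform estimate over $x \in \{2/b, \ldots, b/b\}$, I would split the range at a fixed threshold $\delta > 0$. For $x \geq \delta$ the three error contributions $\eps\, x^{\alpha-k}$, $(bx)^{-1} x^{\alpha-k}$ and $b^{k-\alpha}\, p^{bx} (bx)^{-k}$ are each uniformly controlled: the first by choosing $\eps$ small (after fixing $\delta$), the second by taking $b$ large, and the third by the exponential decay of $p^{bx}$. For $x < \delta$, which covers only a bounded range of small values of $m = bx$, $\lambda_{m,k}$ can be bounded directly from its integral representation and compared term-by-term to $c_k\, m^{\alpha-k}$.

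The main obstacle is precisely this uniform control near $x = 0$: the asymptotic from Lemma~\ref{lemma:zeta_convergence} is effective only for large $m$, while the target $c_k\, x^{\alpha-k}$ itself blows up as $x \to 0$ when $k > \alpha$. The geometric decay of $p^{m-k}$ in the error bound of Lemma~\ref{lemma:zeta_convergence} is what ensures the asymptotic regime kicks in quickly once $m$ is moderately large, while a separate direct argument handles the finitely many small-$m$ contributions.
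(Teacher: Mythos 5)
The paper offers no argument for this lemma beyond the sentence ``Lemma~\ref{lemma:zeta_convergence} immediately implies the following lemma,'' so your route---substitute $m=bx$ into Lemma~\ref{lemma:zeta_convergence}, divide by $\binom{m}{k}$, and control the prefactor by Stirling---is exactly the intended one. However, two steps in your write-up do not close. First, the constant: dividing $A_\Lambda\tfrac{\Gamma(k-\alpha)}{\Gamma(k+1)}m^\alpha$ by $\binom{m}{k}\sim m^k/k!$ makes the $k!=\Gamma(k+1)$ cancel exactly, yielding
\[
\lambda_{m,k}\approx A_\Lambda\,\Gamma(k-\alpha)\,m^{\alpha-k},
\]
not $A_\Lambda\tfrac{\Gamma(k-\alpha)}{k}m^{\alpha-k}$. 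You cannot simply declare that your $c_k$ ``denotes the constant appearing on the right-hand side of the statement'': your computation produces a constant that differs from it by a factor of $k$, and this needs to be reconciled explicitly (the standard asymptotic $\lambda_{b,k}\sim A_\Lambda\Gamma(k-\alpha)b^{\alpha-k}$ for SRV$(\alpha)$ measures supports your computation rather than the displayed statement, so the discrepancy should at least be flagged rather than silently absorbed).

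Second, the small-$x$ regime, which you rightly single out as the main obstacle, is not repaired by your proposed patch. The range $x<\delta$ corresponds to $m=bx\in\{2,\dots,\lfloor\delta b\rfloor\}$, which is \emph{not} a bounded set of values of $m$; and for the genuinely bounded values the claim actually fails. Take $k=2$ and $x=2/b$, so $m=2$: the quantity to be controlled is $b^{2-\alpha}\bigl|\lambda_{2,2}-A_\Lambda\tfrac{\Gamma(2-\alpha)}{2}2^{\alpha-2}\bigr|=b^{2-\alpha}\bigl|\Lambda[0,1]-A_\Lambda\Gamma(2-\alpha)2^{\alpha-3}\bigr|$, which diverges as $b\to\infty$ for $\alpha<2$ unless $\Lambda$ satisfies an exact coincidence. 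No term-by-term comparison of $\lambda_{m,k}$ with $c_k m^{\alpha-k}$ for small $m$ can rescue this: the maximum over all $x\geq 2/b$ does not tend to zero, and the convergence can only be uniform over $x$ bounded away from $0$ (which is all that is ever needed downstream, since $X_0(t)$ stays bounded below on compact time intervals). As written, your argument proves the correct statement only on $x\in[\delta,1]$ and with the constant $A_\Lambda\Gamma(k-\alpha)$; both restrictions should be stated rather than waved away.
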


Next we show a result about the total rate of coalescence.
For this let
\[
  \lambda_b:=\sum_{k=2}^b \binom{b}{k}\lambda_{b,k}
\]
be the total rate of coalescence when there are $b$ blocks present.

\begin{lemma}\label{lemma:total_rates}
  For $\alpha\in (0,2)$,
  \[
    \lim_{b \to \infty}\max_{x \in \{2/b,\dots,b/b\}} \left|\frac{1}{b^\alpha}\lambda_{bx} -A_\Lambda\frac{\Gamma(2-\alpha)}{\alpha}x^{\alpha} \right|=0.
  \]
\end{lemma}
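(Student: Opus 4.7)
The plan is to apply Lemma~\ref{lemma:zeta_convergence} with $b$ replaced by $m:=bx$ and sum the resulting estimate over $k \in \{2,\dots,m\}$. Since $\lambda_m = \sum_{k=2}^m\binom{m}{k}\lambda_{m,k}$, the triangle inequality gives, for any fixed $\eps>0$ and $p=p(\eps)\in(0,1)$ as in that lemma,
\[
  \left|\frac{\lambda_m}{m^\alpha} - A_\Lambda \sum_{k=2}^m \frac{\Gamma(k-\alpha)}{\Gamma(k+1)}\right| \leq C(\eps+m^{-1})\sum_{k=2}^m\frac{\Gamma(k-\alpha)}{\Gamma(k+1)} + \frac{C}{m^\alpha}\sum_{k=2}^m p^{m-k-2}.
\]

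Next I would identify the limit of the partial sum. A short Beta-function manipulation (using $\Gamma(k-\alpha)/\Gamma(k+1) = B(k-\alpha,\alpha+1)/\Gamma(\alpha+1)$ and swapping sum with integral) yields $\sum_{k=2}^\infty \Gamma(k-\alpha)/\Gamma(k+1) = \Gamma(2-\alpha)/\alpha$, together with a tail bound $\sum_{k>m}\Gamma(k-\alpha)/\Gamma(k+1) = O(m^{-\alpha})$ coming from Stirling (the summand decays like $k^{-1-\alpha}$). The geometric sum is controlled by $\sum_{k=2}^m p^{m-k-2} \leq p^{-2}/(1-p)$. Inserting these three facts into the previous display gives
\[
  \left|\frac{\lambda_m}{m^\alpha} - A_\Lambda\frac{\Gamma(2-\alpha)}{\alpha}\right| \leq C_1(\eps + m^{-1} + m^{-\alpha}).
\]

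To pass to the uniform statement I factor out $x^\alpha$: writing $b^{-\alpha}\lambda_{bx} - A_\Lambda\Gamma(2-\alpha)x^\alpha/\alpha = x^\alpha\bigl[(bx)^{-\alpha}\lambda_{bx} - A_\Lambda\Gamma(2-\alpha)/\alpha\bigr]$ and plugging in the bound above with $m=bx$ shows the quantity of interest is at most $C_1(\eps x^\alpha + x^{\alpha-1}/b + b^{-\alpha})$. For $x \in \{2/b,\dots,1\}$ the last two terms tend to zero uniformly (treating $\alpha \geq 1$ and $0<\alpha<1$ separately for $x^{\alpha-1}/b$, where in the latter case the maximum occurs at $x=2/b$ and is of order $b^{-\alpha}$), so sending $b\to\infty$ and then $\eps\to 0$ finishes the argument. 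The main obstacle is preserving uniformity at the lower end $x = 2/b$, where $m$ stays bounded and Lemma~\ref{lemma:zeta_convergence} is weakest; the factor $x^\alpha$ in the decomposition is precisely what dampens the $m^{-1}$ and $m^{-\alpha}$ blow-ups there.
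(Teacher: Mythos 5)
Your proof is correct and follows essentially the same route as the paper's: apply Lemma~\ref{lemma:zeta_convergence} at $m=bx$, sum the error over $k$, control the geometric remainder by $1/(1-p)$, and factor out $x^\alpha$ so that the $m^{-1}$ and $m^{-\alpha}$ terms stay uniformly small down to $x=2/b$. The only cosmetic difference is that you identify $\sum_{k\geq 2}\Gamma(k-\alpha)/\Gamma(k+1)=\Gamma(2-\alpha)/\alpha$ via a Beta-function computation plus a Stirling tail bound of order $m^{-\alpha}$, whereas the paper uses the exact finite-sum identity $\sum_{k=2}^{m}\Gamma(k-\alpha)/\Gamma(k+1)=\Gamma(2-\alpha)/\alpha-\Gamma(m+1-\alpha)/(\alpha\Gamma(m+1))$ proven by induction; these give the same tail control.
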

\begin{proof}
  It is easy to verify by induction that for each $b \geq 2$,
  \[
    \sum_{k=2}^b\frac{\Gamma(k-\alpha)}{\Gamma(k+1)} = \frac{\Gamma(2-\alpha)}{\alpha} - \frac{\Gamma(b+1-\alpha)}{\alpha \Gamma(b+1)}.
  \]
  Thus by Lemma~\ref{lemma:zeta_convergence}, for any $\eps>0$,
  \begin{align}\label{eq:bound_me_over_x}
    \left|\frac{1}{b^\alpha}\sum_{k=2}^{bx} \binom{bx}{k}\lambda_{bx,k} -A_\Lambda\frac{\Gamma(2-\alpha)}{\alpha}x^{\alpha} \right| &\leq x^\alpha\sum_{k=2}^{bx} \left| \binom{bx}{k}\frac{\lambda_{bx,k}}{(bx)^\alpha} - A_\Lambda \frac{\Gamma(k-\alpha)}{\Gamma(k+1)} \right| + x^\alpha\frac{\Gamma(b+1-\alpha)}{\alpha \Gamma(b+1)}  \nonumber \\
                                                                                                                                    &\leq Cx^\alpha(\eps + x^{-1}b^{-1})\frac{\Gamma(2-\alpha)}{\alpha}  + x^\alpha\frac{\Gamma(bx+1-\alpha)}{\alpha \Gamma(bx+1)} + C\frac{\sum_{k=2}^{bx} p^{bx-k-2}}{b^\alpha}\nonumber\\
                                                                                                                                    &\leq Cx^\alpha(\eps + x^{-1}b^{-1})\frac{\Gamma(2-\alpha)}{\alpha}  + x^\alpha\frac{\Gamma(bx+1-\alpha)}{\alpha \Gamma(bx+1)} + \frac{C}{(1-p)b^\alpha}.
  \end{align}

  Next we obtain uniform bounds on~\eqref{eq:bound_me_over_x} over $x \in \{2/b,\dots,b/b\}$.
  For this, notice first that $x^{\alpha-1}b^{-1} \leq b^{-1}\vee b^{\alpha-2}$.
  Next we have by Stirling's approximation that there exists a constant $C>0$ such that
  \[
    x^\alpha\frac{\Gamma(bx+1-\alpha)}{\alpha \Gamma(bx+1)}\leq C x^{\alpha}{(bx)}^{-\alpha} = C b^{-\alpha}.
  \]
  Hence in conclusion we see that there exists a constant $C'>0$ such that
  \[
    \max_{x\in \{2/b,\dots,b/b\}}\left|\frac{1}{b^\alpha}\sum_{k=2}^{bx} \binom{bx}{k}\lambda_{bx,k} -A_\Lambda\frac{\Gamma(2-\alpha)}{\alpha}x^{\alpha} \right| \leq C'(\eps +  b^{-1}\vee b^{\alpha-2})\frac{\Gamma(2-\alpha)}{\alpha} +  \frac{C'}{(1-p)b^\alpha}.
  \]
  Taking limits and using the fact that $\eps>0$ is arbitrary gives the desired result.
\end{proof}

Now we show convergence of the rate of the number of blocks involved in a merger.

\begin{lemma}\label{lemma:expected_rates}
  For $b \geq 2$ and $x \in [0,1]$ define

  \noindent\begin{minipage}{.5\textwidth}
    \[
      g(b):=\begin{cases}
        \frac{1}{b}&\text{if }\alpha\in (0,1)\\
        \frac{1}{b\log b}&\text{if } \alpha=1\\
        \frac{1}{b^\alpha}&\text{if }\alpha\in (1,2)
      \end{cases}
    \]
  \end{minipage}
  \noindent\begin{minipage}{.5\textwidth}
    \[
      \kappa(x):=A_\Lambda\times\begin{cases}
        \frac{x}{1-\alpha}&\text{if }\alpha\in (0,1)\\
        x&\text{if }\alpha=1\\
        \frac{\Gamma(2-\alpha)}{\alpha-1}x^\alpha&\text{if }\alpha\in (1,2).
      \end{cases}
    \]
  \end{minipage}

  Then for $\alpha\in (0,2)$,
  \[
    \lim_{b \rightarrow\infty}\sup_{x \in \{2/b,\dots,b/b\}} \left|g(b)\sum_{k=2}^{bx} k\binom{bx}{k}\lambda_{bx,k} -\kappa(x) \right|=0.
  \]
\end{lemma}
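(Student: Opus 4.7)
The plan is to mirror the proof of Lemma~\ref{lemma:total_rates}: apply Lemma~\ref{lemma:zeta_convergence} with $b$ replaced by $bx$, reduce the sum to its principal part, and identify a telescoping identity for the resulting sum. The extra factor of $k$ here collapses $k\,\Gamma(k-\alpha)/\Gamma(k+1)$ into $\Gamma(k-\alpha)/\Gamma(k)$, so the principal part takes the form
\[
g(b)\,A_\Lambda(bx)^\alpha\sum_{k=2}^{bx}\frac{\Gamma(k-\alpha)}{\Gamma(k)}.
\]

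The analogue of the telescoping used in Lemma~\ref{lemma:total_rates} is the one-line identity
\[
\frac{\Gamma(k+1-\alpha)}{\Gamma(k)}-\frac{\Gamma(k-\alpha)}{\Gamma(k-1)}=(1-\alpha)\frac{\Gamma(k-\alpha)}{\Gamma(k)},
\]
which for $\alpha\neq 1$ yields the closed form $\sum_{k=2}^{N}\Gamma(k-\alpha)/\Gamma(k)=(1-\alpha)^{-1}\bigl(\Gamma(N+1-\alpha)/\Gamma(N)-\Gamma(2-\alpha)\bigr)$, while for $\alpha=1$ the sum reduces to $\sum_{k=2}^N 1/(k-1)$. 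Combined with Stirling's estimate $\Gamma(N+1-\alpha)/\Gamma(N)\sim N^{1-\alpha}$, the three regimes split cleanly: for $\alpha\in(0,1)$ the $N^{1-\alpha}$ term dominates and the $g(b)=1/b$ scaling produces $A_\Lambda x/(1-\alpha)$; for $\alpha\in(1,2)$ the sum converges to $\Gamma(2-\alpha)/(\alpha-1)$ and the $g(b)=1/b^\alpha$ scaling produces $A_\Lambda\Gamma(2-\alpha)x^\alpha/(\alpha-1)$; for $\alpha=1$ the sum is asymptotic to $\log(bx)$ and the $g(b)=1/(b\log b)$ scaling produces $x\log(bx)/\log b$, from which $\kappa(x)=A_\Lambda x$ is recovered using $x|\log(bx)/\log b-1|=x|\log x|/\log b$.

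The remaining step is to control, uniformly over $x\in\{2/b,\dots,b/b\}$, the two error terms from Lemma~\ref{lemma:zeta_convergence}. The multiplicative error, summed with the extra factor of $k$, again telescopes and contributes at most an $O(\eps+(bx)^{-1})$-multiple of the main sum, which is killed by sending $b\to\infty$ then $\eps\downarrow 0$. The exponentially decaying tail $p^{bx-k-2}/(bx)^\alpha$, after weighting by $k$, is bounded by a geometric sum yielding $O((bx)^{1-\alpha})$; an elementary case check (worst case $\alpha\in(1,2)$ at $x=2/b$, where $g(b)(bx)^{1-\alpha}=2^{1-\alpha}b^{-\alpha}$) shows that $g(b)\cdot(bx)^{1-\alpha}\to 0$ uniformly in $x$ in every regime.

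I expect the main difficulty to be the $\alpha=1$ case. The telescoping identity degenerates and must be replaced by the harmonic sum, and the logarithmic scaling makes uniform control at small $x$ delicate: the argument relies specifically on the fact that $x\mapsto x|\log x|$ is bounded on $(0,1]$, which absorbs the discrepancy between $x\log(bx)/\log b$ and $x$ uniformly across the entire range of $x$.
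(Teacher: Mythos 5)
Your approach is the same as the paper's: the paper verifies (by induction) exactly the closed form you obtain by telescoping, namely
\[
\sum_{k=2}^b k\,\frac{\Gamma(k-\alpha)}{\Gamma(k+1)}=\frac{\Gamma(2-\alpha)}{\alpha-1}-\frac{\Gamma(b+1-\alpha)\,b(b+1)}{(\alpha-1)\Gamma(b+2)},
\]
reads off the three regimes via Stirling, and then simply asserts that uniformity in $x$ ``follows from similar estimates to those in the proof of Lemma~\ref{lemma:total_rates}.'' Your handling of the principal part --- including the degeneration to the harmonic sum at $\alpha=1$ and the use of the boundedness of $x|\log x|$ on $(0,1]$ to control $x\log(bx)/\log b - x$ uniformly --- is correct and more explicit than the paper's.

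There is, however, a bookkeeping slip in your tail estimate. Lemma~\ref{lemma:zeta_convergence} bounds the error on the \emph{normalized} rate $\binom{bx}{k}\lambda_{bx,k}/(bx)^\alpha$; to convert this to an error on $g(b)\sum_k k\binom{bx}{k}\lambda_{bx,k}$ you must multiply back by $(bx)^\alpha$, exactly as you do for the principal part. The tail therefore contributes $g(b)(bx)^\alpha\cdot O\bigl((bx)^{1-\alpha}\bigr)=g(b)\cdot O(bx)$, not $g(b)\cdot O\bigl((bx)^{1-\alpha}\bigr)$. For $\alpha\in(1,2)$ this is $O(xb^{1-\alpha})\to 0$ and for $\alpha=1$ it is $O(x/\log b)\to 0$, so your conclusion survives in the regimes the rest of the paper uses (and your ``worst case'' identification is moot). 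But for $\alpha\in(0,1)$ the corrected bound is $O(x)$, which does not vanish at $x=1$: the geometric bound $\sum_k k\,p^{bx-k-2}=O(bx)$ is dominated by $k$ of order $bx$, where Lemma~\ref{lemma:zeta_convergence}'s additive error term vastly overestimates the actual rates, and it is then of the same order as the main term. Closing the $\alpha\in(0,1)$ case requires a sharper treatment of the large-$k$ terms (e.g.\ bounding $\sum_{k>bx/2}k\binom{bx}{k}\lambda_{bx,k}$ directly rather than through the additive error). To be fair, the paper's one-line appeal to ``similar estimates'' glosses over exactly the same point.
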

\begin{proof}
  Fix $\alpha\in (0,2)\backslash\{1\}$.
  One can verify by induction on $b$ that
  \[
    \sum_{k=2}^b k \frac{\Gamma(k-\alpha)}{\Gamma(k+1)}=\frac{\Gamma (2-\alpha )}{\alpha-1}-\frac{\Gamma (b-\alpha +1)b (b+1)}{(\alpha -1) \Gamma (b+2)}.
  \]
  In the case when $\alpha\in(1,2)$, the second term converges to $0$ as $b\to\infty$ and in the case when $\alpha\in(0,1)$, the second term behaves like $b^{1-\alpha}/(\alpha-1)$ as $b\to\infty$.
  Thus we see that for every $\alpha \in (0,2)\backslash\{1\}$,
  \[
    \lim_{b \to \infty}b^{\alpha}g(b)\sum_{k=2}^b k \frac{\Gamma(k-\alpha)}{\Gamma(k+1)} = \kappa(1).
  \]
  On the other hand when $\alpha=1$ we have that
  \[
    \lim_{b\to\infty}b^\alpha g(n)\sum_{k=2}^b k \frac{\Gamma(k-\alpha)}{\Gamma(k+1)}=\lim_{b\to\infty}\frac{1}{\log b}\sum_{k=2}^b \frac{1}{k-1} = 1.
  \]
  The lemma now follows from similar estimates to those in the proof of Lemma~\ref{lemma:total_rates}.
\end{proof}

We finish this section with the following result, which follows from similar computations as before and we leave the proof out.

\begin{lemma}\label{lemma:square_rate}
  For $b \geq 2$ and $x \in [0,1]$ define

  \noindent\begin{minipage}{.5\textwidth}
    \[
      \tilde g(b):=\begin{cases}
        \frac{1}{b^2}&\text{if }\alpha\in (0,1]\\
        \frac{1}{b^{2(\alpha-1)}}&\text{if }\alpha\in (1,2)
      \end{cases}
    \]
  \end{minipage}
  \noindent\begin{minipage}{.5\textwidth}
    \[
      \tilde\kappa(x):=A_\Lambda \times\begin{cases}
        \frac{x^2}{1-\alpha}&\text{if }\alpha\in (0,1)\\
        x^2&\text{if }\alpha=1\\
        \frac{x^{2(\alpha-1)}}{2-\alpha}&\text{if }\alpha\in (1,2).
      \end{cases}
    \]
  \end{minipage}

  Then for $\alpha\in (0,2)$,
  \[
    \lim_{b \rightarrow\infty}\sup_{x \in \{2/b,\dots,b/b\}} \left|\tilde g(b)\sum_{k=2}^{bx} k(k-1)\binom{bx}{k}\lambda_{bx,k} -\tilde \kappa(x)\right|=0.
  \]
\end{lemma}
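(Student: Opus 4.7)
The plan is to follow the template of the proofs of Lemma~\ref{lemma:total_rates} and Lemma~\ref{lemma:expected_rates}: derive a closed-form expression for the reference $\Gamma$-sum with weight $k(k-1)$, combine it with Lemma~\ref{lemma:zeta_convergence} to transfer the estimate to the actual rates $\lambda_{bx,k}$, and control the error uniformly in $x\in\{2/b,\dots,1\}$.

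For the closed form I would simplify $k(k-1)\Gamma(k-\alpha)/\Gamma(k+1)=\Gamma(k-\alpha)/\Gamma(k-1)$ and telescope using
\[
\frac{\Gamma(k+1-\alpha)}{\Gamma(k-1)} - \frac{\Gamma(k-\alpha)}{\Gamma(k-2)} = (2-\alpha)\frac{\Gamma(k-\alpha)}{\Gamma(k-1)},
\]
which, after handling the boundary contribution at $k=2$, yields
\[
\sum_{k=2}^{b} k(k-1)\frac{\Gamma(k-\alpha)}{\Gamma(k+1)} = \frac{\Gamma(b+1-\alpha)}{(2-\alpha)\Gamma(b-1)}.
\]
Stirling's formula gives $\Gamma(b+1-\alpha)/\Gamma(b-1)\sim b^{2-\alpha}$ (and the exact value $b-1$ when $\alpha=1$), so that when multiplied by the $A_\Lambda(bx)^\alpha$ factor from Lemma~\ref{lemma:zeta_convergence} and scaled by $\tilde g(b)$, the resulting quantity matches $\tilde\kappa(x)$ in each of the three regimes of $\alpha$.

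The remaining step is to upgrade this heuristic to the true sum by applying Lemma~\ref{lemma:zeta_convergence} with $b\mapsto bx$ to every summand. The discrepancy splits into a relative error of order $C(\eps+(bx)^{-1})$ times the main term, which is driven to zero uniformly in $x$ by first choosing $\eps$ small and then letting $b\to\infty$, and an absolute geometric error $C\sum_{k=2}^{bx} k(k-1)p^{bx-k-2}$. The latter is the main obstacle: the weight $k(k-1)$ concentrates it near $k=bx$, where a crude bound gives only $O((bx)^2)$, comparable to the main term. I would address this by splitting the sum at $k=(1-\delta)bx$, using Lemma~\ref{lemma:zeta_convergence} on the bulk $k\leq(1-\delta)bx$ and handling the boundary $k>(1-\delta)bx$ with a direct Laplace-type estimate for $\binom{bx}{k}\lambda_{bx,k}$ based on the fact that the integrand $p^{k-2}(1-p)^{bx-k}f(p)$ concentrates near $p=k/(bx)$; letting $\delta\downarrow 0$ at the end delivers the desired uniform limit, and the supremum over $x$ is then handled exactly as in the final display of the proof of Lemma~\ref{lemma:total_rates}.
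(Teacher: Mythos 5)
The paper gives no proof of this lemma (it is dismissed as following ``from similar computations as before''), so your attempt has to stand on its own, and it does not: the central assertion that the main term ``matches $\tilde\kappa(x)$ in each of the three regimes'' is false, and your own computation already shows it. Your closed form $\sum_{k=2}^{b}k(k-1)\Gamma(k-\alpha)/\Gamma(k+1)=\Gamma(b+1-\alpha)/\bigl((2-\alpha)\Gamma(b-1)\bigr)\sim b^{2-\alpha}/(2-\alpha)$ is correct, but the resulting candidate main term is $A_\Lambda(bx)^{\alpha}\cdot(bx)^{2-\alpha}/(2-\alpha)=A_\Lambda(bx)^{2}/(2-\alpha)$. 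Multiplying by $\tilde g(b)$ gives $A_\Lambda x^{2}/(2-\alpha)$ for $\alpha\in(0,1]$, which differs from the stated $A_\Lambda x^{2}/(1-\alpha)$ except at $\alpha=1$, and gives $A_\Lambda b^{4-2\alpha}x^{2}/(2-\alpha)\to\infty$ for $\alpha\in(1,2)$. So the Stirling step does not close the argument in any regime except, accidentally, $\alpha=1$.

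The deeper problem is exactly the one you flag and then defer: the contribution near $k=bx$ is genuinely of the same order $(bx)^{2}$ as the bulk, and no splitting at $(1-\delta)bx$ can remove it, because the limit of the normalized sum is not determined by the behaviour of $f$ near $0$. Indeed there is an exact identity: differentiating $(q+1-p)^{b}$ twice in $q$ and setting $q=p$ gives $\sum_{k=2}^{b}k(k-1)\binom{b}{k}p^{k-2}(1-p)^{b-k}=b(b-1)$ for every $p$, hence
\[
  \sum_{k=2}^{bx}k(k-1)\binom{bx}{k}\lambda_{bx,k}=bx(bx-1)\,\Lambda[0,1].
\]
Thus $b^{-2}$ times the sum converges (uniformly and trivially) to $x^{2}\Lambda[0,1]$, a constant depending on the whole measure $\Lambda$ and not only on $A_\Lambda$ and $\alpha$, while with $\tilde g(b)=b^{-2(\alpha-1)}$ the normalized sum diverges for $\alpha\in(1,2)$. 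In other words, the lemma as printed cannot be proved because it is not true as stated; what is true, and is all the paper actually uses in Lemma~\ref{lemma:square_term}, is the bound $\sum_{k=2}^{b}(k-1)^{2}\binom{b}{k}\lambda_{b,k}\leq b(b-1)\Lambda[0,1]$. A correct replacement in the spirit of the section would take $\tilde g(b)=b^{-2}$ and $\tilde\kappa(x)=\Lambda[0,1]\,x^{2}$ for all $\alpha\in(0,2)$.
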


\section{Convergence of the caterpillars at a given time}\label{sec:at}

Suppose now that $\Lambda$ is a finite SRV$(\alpha)$ measure with index $\alpha\in (1,2]$.
Notice that the number of $r$-caterpillars is invariant under time-changes, hence we can assume that without a loss of generality $\Lambda$ is normalised so that $A_\Lambda=1$.
We also drop $n$ from the notation and let $\Pi$ be a $\Lambda$-coalescent restricted to $\{1,\dots,n\}$.

For $r\in \N$ and $t \geq 0$ we let $Y_r(t)$ denote the number of $r$-caterpillars at time $t$, that is, $Y_r(t)$ is the number of blocks of $\Pi(t)$ that are $r$-caterpillars.
Then $Y_1(t)$ is simply the number of singletons of $\Pi(t)$ and we let $Y_0(t)$ denote the number of blocks of $\Pi(t)$.

Next, for each $r \geq 0$ and $t \geq 0$, let
\[
  X_r(t) =\begin{cases}
    \frac{1}{n}Y_r\left(t \frac{\alpha}{n^{\alpha-1}\Gamma(2-\alpha)}\right) &\text{if }\alpha\in(1,2)\\
    \frac{1}{n} Y_r(tn^{-1})&\text{if } \alpha=2
  \end{cases}
\]
and let $(\mathcal F_t:t \geq 0)$ denote the natural filtration of $(X_0,\dots,X_n)$.

Now present the main theorem of the section which we will then prove.

\begin{theorem}\label{thm:cherries_at}
  For each $T>0$ and $r \geq 0$,
  \[
    \lim_{n\to\infty}\sup_{t \leq T}|X_r(t)-x_r(t)|=0
  \]
  in almost surely, where
  \[
    x_r(t)=\begin{cases}
      (1+t)^{-\frac{1}{\alpha-1}}&\text{if }r=0\\
      (1+t)^{-\frac{\alpha}{\alpha-1}}&\text{if }r=1\\
      \frac{1}{2(r-1)!}(1+t)^{-\frac{\alpha}{\alpha-1}}\left(\frac{\alpha t}{1+t}\right)^{r-1}&\text{if }r \geq 2.
    \end{cases}
  \]
\end{theorem}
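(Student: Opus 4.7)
My plan is to treat the vector $(X_0,X_1,\ldots,X_r)$ as a semimartingale and identify its compensator as the right-hand side of an explicit system of ODEs whose solution is the claimed $(x_0,\ldots,x_r)$. Writing the unrescaled process as
\[
  Y_r(s)=Y_r(0)+\int_0^s(\mathcal{L}Y_r)(\Pi(u))\,\d u+M_r(s)
\]
with $M_r$ a local martingale, the generator is read off the coalescent rates. A $k$-merger reduces $Y_0$ by $k-1$, giving $\mathcal{L}Y_0=-\sum_{k=2}^{Y_0}(k-1)\binom{Y_0}{k}\lambda_{Y_0,k}$. For $Y_1$ and $Y_r$ with $r\ge 2$, the merging blocks are chosen uniformly at random from the $Y_0$ current blocks, so each tagged block is destroyed at rate $Y_0^{-1}\sum_{k\ge 2}k\binom{Y_0}{k}\lambda_{Y_0,k}$. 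The creation of an $r$-caterpillar is highly restrictive: by the definition it must arise from a binary merger of an $(r-1)$-caterpillar with a singleton, so the creation rate is $\binom{Y_1}{2}\lambda_{Y_0,2}$ when $r=2$ and $Y_1 Y_{r-1}\lambda_{Y_0,2}$ when $r\ge 3$, while $Y_1$ itself has no creation term.

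Applying Lemma~\ref{lemma:total_rates}, Lemma~\ref{lemma:expected_rates} and Lemma~\ref{lemma:lambda_unif} to the rescaled process shows that, uniformly in the state, the compensator of $X_r$ converges to a polynomial in $(X_0,\ldots,X_r)$. After absorbing the normalisation $A_\Lambda=1$ and the time change $\alpha/(n^{\alpha-1}\Gamma(2-\alpha))$, this yields the closed system
\begin{align*}
  x_0'&=-\tfrac{1}{\alpha-1}x_0^\alpha, & x_1'&=-\tfrac{\alpha}{\alpha-1}x_1x_0^{\alpha-1}, \\
  x_2'&=\tfrac{\alpha}{2}x_1^2x_0^{\alpha-2}-\tfrac{\alpha}{\alpha-1}x_2x_0^{\alpha-1}, & x_r'&=\alpha\,x_{r-1}x_1x_0^{\alpha-2}-\tfrac{\alpha}{\alpha-1}x_rx_0^{\alpha-1}\quad(r\ge 3),
\end{align*}
with $x_0(0)=x_1(0)=1$ and $x_r(0)=0$ for $r\ge 2$; the extra $1/2$ at $r=2$ reflects the difference between $\binom{Y_1}{2}$ and $Y_1 Y_{r-1}$. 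The formulas $x_0=(1+t)^{-1/(\alpha-1)}$ and $x_1=(1+t)^{-\alpha/(\alpha-1)}$ are immediate, and substituting the ansatz of the theorem into the recursion and matching powers of $(1+t)$ verifies the closed form inductively on $r$.

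For the probabilistic part, Lemma~\ref{lemma:square_rate} controls the predictable quadratic variation of the martingale part of $X_r$: each merger changes $Y_r$ by $O(k)$, and combining the lemma with the time change gives a bound of order $1/n$ for the expected quadratic variation on $[0,T]$, so Doob's $L^2$-inequality and Borel-Cantelli give almost sure uniform convergence of the martingale part to zero on $[0,T]$. The sample-path convergence of $X_r$ to $x_r$ is then an induction on $r$: assuming $X_0,\ldots,X_{r-1}\to x_0,\ldots,x_{r-1}$ uniformly on $[0,T]$ almost surely, Gronwall's inequality applied to $|X_r(t)-x_r(t)|$ closes the induction, using the Lipschitz character of the right-hand side on the compact set where the solutions live and the fact that $x_0(t)\ge(1+T)^{-1/(\alpha-1)}>0$ keeps the rate estimates of Section~\ref{sec:rates} applicable. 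The main obstacle is the simultaneous handling of drift approximation, Gronwall control and martingale fluctuations within a single induction step, which relies crucially on the uniformity in the state of the convergences in Section~\ref{sec:rates}.
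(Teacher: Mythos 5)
Your proposal follows essentially the same route as the paper: identify the drift of $X_r$ from the merger rates (the urn/tagged-block computation for destruction, the binary-merger term $\lambda_{Y_0,2}Y_1Y_{r-1}$ for creation), pass to the limiting closed ODE system via the uniform rate estimates of Section~\ref{sec:rates}, control the martingale part through the quadratic variation bound of Lemma~\ref{lemma:square_term}, and close an induction on $r$ with Gronwall's inequality. The only minor divergence is in upgrading the martingale estimate to almost sure uniform convergence, where you invoke Borel--Cantelli (which would need a summability or subsequence argument that you do not supply) while the paper appeals to bounded convergence; both treatments are equally terse at that single point, and otherwise the arguments coincide.
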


\begin{remark}
  At the time of writing this paper,~\cite{miller2016hydrodynamic} appeared, which shows Theorem~\ref{thm:cherries_at} for $r=0,1$ for Beta-distributions.
\end{remark}

We now focus on showing Theorem~\ref{thm:cherries_at}.
For a continuous time Feller process $Z=(Z_t:t\geq 0)$ adapted to a filtration $(\mathcal H_t:t\geq 0)$ define
\[
  \E[{\rm d} Z_t|\mathcal H_t]:=\lim_{\delta\downarrow 0}\frac{1}{\delta}\E[Z_{t+\delta}-Z_t|\mathcal H_t]\quad\text{ and }\quad \E[({\rm d} Z_t)^2|\mathcal H_t]:=\lim_{\delta\downarrow 0}\frac{1}{\delta}\E[(Z_{t+\delta}-Z_t)^2|\mathcal H_t]\quad t\geq 0.
\]

\begin{lemma}\label{lemma:drift}
  For $r \geq 0$ and $t \geq 0$ define
  \[
    \xi_r(t):=\begin{cases}
      -\frac{X_0(t)^\alpha}{\alpha-1} &\text{if }r=0\\
      -\frac{\alpha}{\alpha-1}X_1(t)X_0(t)^{\alpha-1}&\text{if }r=1\\
      \alpha\frac{X_1(t)^2}{2X_0(t)^{2-\alpha}}- \frac{\alpha}{\alpha-1}X_2(t)X_0(t)^{\alpha-1}&\text{if }r=2\\
      \alpha\frac{X_{r-1}(t)X_1(t)}{X_0(t)^{2-\alpha}}-\frac{\alpha}{\alpha-1}X_r(t)X_0(t)^{\alpha-1}&\text{if }r\geq 3.
    \end{cases}
  \]
  Then, almost surely
  \[
    \lim_{n\to\infty}\sup_{t \geq 0}\big|\E[{\rm d}X_r(t)|\mathcal F_t]-\xi_r(t)\big|=0.
  \]
\end{lemma}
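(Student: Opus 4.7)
The plan is to write down the drift of $Y_r$ explicitly in the original time scale, apply the time change, and identify the limit using the uniform asymptotics from Section~\ref{sec:rates}. First I would compute the generator of the $\Lambda$-coalescent applied to $Y_r$ by enumerating the possible merger events. In a state with $b$ blocks, a specified $k$-tuple of blocks coalesces at rate $\lambda_{b,k}$. For $r=0$, every $k$-merger decreases $Y_0$ by $k-1$. For $r=1$, a $k$-merger destroys exactly the singletons participating in it and never creates a singleton (the new block has size $\geq 2$). For $r \geq 2$, an $r$-caterpillar is destroyed whenever it joins any merger, and a new $r$-caterpillar is created \emph{only} by a $2$-merger between a singleton and an $(r-1)$-caterpillar, since any $k$-merger with $k\geq 3$ produces a block whose tracking function $s\mapsto|\Pi_{c_s(i)}(s)|$ has a final jump of size $\geq 2$, violating the caterpillar condition. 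Summing over merger types yields, for $r\geq 2$,
\[
\E[\d Y_r(s)|\F_s]=\left[\binom{Y_1}{2}\1_{\{r=2\}}+Y_1 Y_{r-1}\1_{\{r\geq 3\}}\right]\lambda_{b,2}-Y_r\sum_{k=2}^{b}\binom{b-1}{k-1}\lambda_{b,k},
\]
with the obvious analogues for $r=0,1$.

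I would then apply the space-time rescaling $X_r(t)=Y_r(tc_n)/n$ with $c_n=\alpha/(n^{\alpha-1}\Gamma(2-\alpha))$ (for $\alpha\in(1,2)$) to obtain $\E[\d X_r(t)|\F_t]=(c_n/n)\E[\d Y_r(s)|\F_s]$, and substitute the asymptotics from Section~\ref{sec:rates}. Using $\binom{b-1}{k-1}=(k/b)\binom{b}{k}$, Lemma~\ref{lemma:expected_rates} gives the destruction rate $b^{-1}\sum_k k\binom{b}{k}\lambda_{b,k}\sim n^{\alpha-1}\Gamma(2-\alpha)(\alpha-1)^{-1}X_0^{\alpha-1}$ uniformly in $X_0$; Lemma~\ref{lemma:zeta_convergence} with $k=2$ (together with $\binom{b}{2}\sim b^2/2$) yields $\lambda_{b,2}\sim\Gamma(2-\alpha)n^{\alpha-2}X_0^{\alpha-2}$. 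Inserting these and using $(c_n/n)\cdot n^\alpha\Gamma(2-\alpha)=\alpha$, the prefactors cancel cleanly and reveal exactly $\xi_r(t)$.

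The main obstacle is securing the uniformity in $t$. Since the estimates in Lemmas~\ref{lemma:zeta_convergence}--\ref{lemma:expected_rates} are uniform in $X_0(t)\in\{2/n,\dots,1\}$, and $X_0(t)$ always takes values in this set (the edge case $X_0=1/n$ corresponds to no remaining mergers, when both sides vanish), the pathwise supremum $\sup_{t\geq 0}|\E[\d X_r(t)|\F_t]-\xi_r(t)|$ is controlled by a deterministic $o(1)$ error, giving almost sure convergence. The subtle point is propagating the uniform control of $\lambda_{b,2}$ from Lemma~\ref{lemma:zeta_convergence} through the creation term: for large $b$ the lemma applies directly, while for $b$ of bounded order both the creation rate $\binom{Y_1}{2}\lambda_{b,2}\leq b^2\Lambda[0,1]$ and the corresponding $\xi_r$ contribution $X_0^{\alpha-2}X_1 X_{r-1}\leq X_0^\alpha$ are of order $n^{-\alpha}$ after rescaling, hence negligible. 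For $\alpha=2$ the rates collapse to $\lambda_{b,k}=\1_{\{k=2\}}$, and all sums collapse to elementary expressions that give $\xi_r$ by the same structural computation.
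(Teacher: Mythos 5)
Your proposal is correct and follows essentially the same route as the paper: identify that $r$-caterpillars are created only by $2$-mergers of a singleton with an $(r-1)$-caterpillar and destroyed by any merger involving them, write the resulting drift, and pass to the limit uniformly via the rate estimates of Section~\ref{sec:rates}. The only cosmetic difference is that the paper organises the "which blocks participate" bookkeeping through an urn model with indicator events $A^r_i(k,t)$ and the probabilities in~\eqref{eq:prob_A}, whereas you write the equivalent combinatorial rates $\binom{b-1}{k-1}\lambda_{b,k}$ directly; both yield the same expressions and the same conclusion.
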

\begin{proof}
  Notice that $Y_0(t)$ decreases by $(k-1)$ at rate $\binom{Y_0(t)}{k}\lambda_{Y_0(t),k}$.
  Thus we see that
  \[
    \E[{\rm d}X_0(t)|\mathcal F_t]=-\frac{1}{n}\sum_{k=1}^{nX_0(t)}(k-1)\binom{nX_0(t)}{k}\lambda_{nX_0(t),k}
  \]
  The result for $r=0$ now follows from Lemma~\ref{lemma:total_rates} and Lemma~\ref{lemma:expected_rates}.

  Now suppose that $r \geq 1$.
  Imagine an urn with $nX_0(t)$ many balls and for each $r \geq 1$, there are $nX_r(t)$ balls with the label $r$.
  Let us write $\chi^r_1(t),\dots,\chi^r_{nX_r(t)}(t)$ for the balls with label $r$.
  For $r \geq 1$ and $i\leq nX_r(t)$, let $A^r_i(k,t)$ be the event that when $k$ balls are chosen from the urn, uniformly at random without replacement, the ball $\chi^r_i(t)$ is chosen.
  Then at rate
  \begin{equation}\label{eq:rate}
    n^{1-\alpha}\frac{\alpha-1}{\Gamma(2-\alpha)}\binom{nX_0(t)}{k}\lambda_{nX_0(t),k}
  \end{equation}
  we have that $X_r(t)$ changes by
  \begin{equation}\label{eq:cond_change}
    \frac{1}{n}\left(\1_{\{k=2, r\geq 2\}}\sum_{i=1}^{nX_{r-1}(t)}\sum_{j=1}^{nX_1(t)}\1_{A^{r-1}_i(k,t)}\1_{A^1_j(k,t)} - \sum_{i=1}^{nX_r(t)}\1_{A^r_i(k,t)}\right).
  \end{equation}
  Indeed, at rate~\eqref{eq:rate} we select $k$ blocks uniformly without replacement, and merge these together.
  Merging together an $(r-1)$-caterpillar with a $1$-caterpillar (singleton) results in a new $r$-caterpillar and thus an increase.
  The number of $r$-caterpillars decrease whenever they are involved in the merger.

  Now, for each $k \geq 2$, $r,r' \geq 1$ and $i,j$,
  \begin{equation}\label{eq:prob_A}
    \P(A^r_i(k,t)|\mathcal F_t)=\frac{k}{nX_0(t)}\quad\text{ and }\quad\P(A^r_i(k,t);A^{r'}_j(k,t)|\mathcal F_t)=\frac{k(k-1)}{nX_0(t)(nX_0(t)-1)}\1_{\{i\neq j\text{ or }r\neq r'\}}.
  \end{equation}
  Thus by taking the conditional expectation of~\eqref{eq:cond_change}, multiplying by~\eqref{eq:rate} and summing over $k$ we get that
  \[
    \E[{\rm d}X_1|\mathcal F_t]=-\frac{X_1(t)}{X_0(t)}n^{-\alpha}\frac{\alpha-1}{\Gamma(2-\alpha)}\sum_{k=2}^{nX_0(t)}k\binom{nX_0(t)}{k}\lambda_{nX_0(t),k}
  \]
  and
  \begin{align*}
    \E[{\rm d}X_2(t)|\mathcal F_t]= & \frac{1}{2}X_1(t)(X_1(t)-1/n)n^{2-\alpha}\frac{\alpha-1}{\Gamma(2-\alpha)}\lambda_{nX_0(t),2}                                           \\ & -\frac{X_2(t)}{X_0(t)}n^{-\alpha}\frac{\alpha-1}{\Gamma(2-\alpha)}\sum_{k=2}^{nX_0(t)}k\binom{nX_0(t)}{k}\lambda_{nX_0(t),k}
  \end{align*}
  and finally for $r \geq 3$,
  \begin{align*}
    \E[{\rm d}X_r(t)|\mathcal F_t]= & X_{r-1}(t)X_1(t)n^{2-\alpha}\frac{\alpha-1}{\Gamma(2-\alpha)}\lambda_{nX_0(t),2}                                             \\
    & -\frac{X_2(t)}{X_0(t)}n^{-\alpha}\frac{\alpha-1}{\Gamma(2-\alpha)}\sum_{k=2}^{nX_0(t)}k\binom{nX_0(t)}{k}\lambda_{nX_0(t),k}
  \end{align*}
  The result now follows from applying Lemma~\ref{lemma:lambda_unif} and Lemma~\ref{lemma:expected_rates} when $\alpha\in (1,2)$, and direct computations when $\alpha=2$.
\end{proof}

Next we show that the infinitesimal variance converges to $0$ uniformly in $t$ and $r$.

\begin{lemma}\label{lemma:square_term}
  There exists a constant $C>0$, possibly depending on $\alpha\in (1,2]$, such that
  \[
    \sup_{t \geq 0, r \geq 0}\E[({\rm d}X_r(t))^2|\mathcal F_t] \leq C n^{3-\alpha}.
  \]
\end{lemma}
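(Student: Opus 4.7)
The plan is to apply the standard formula for the infinitesimal quadratic variation of a pure-jump process: $\E[(dX_r(t))^2\mid\mathcal{F}_t]$ equals the sum of squared jump sizes weighted by the corresponding jump rates. Jumps of $X_r$ are driven by $k$-block mergers of $\Pi$, which after the time change occur at rate $\tau_n\binom{Y_0(t)}{k}\lambda_{Y_0(t),k}$, where $\tau_n=\alpha/(n^{\alpha-1}\Gamma(2-\alpha))$ for $\alpha\in(1,2)$ and $\tau_n=1/n$ for $\alpha=2$; in both cases $\tau_n=O(n^{1-\alpha})$.

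The crucial ingredient is a bound on the jump size which is uniform in $r$. In a $k$-merger, $Y_r$ decreases by at most $k$ (the number of $r$-caterpillars among the $k$ merging blocks) and increases by at most one, since by the definition of an $r$-caterpillar a new one can only be produced by a binary merger of a singleton with an $(r-1)$-caterpillar. Hence $|Y_r(t+)-Y_r(t-)|\leq k+1$, giving $(dX_r)^2\leq(k+1)^2/n^2$ at each jump, independently of $r$. This produces the inequality
\[
\E[(dX_r(t))^2\mid\mathcal F_t]\leq\frac{\tau_n}{n^2}\sum_{k=2}^{Y_0(t)}(k+1)^2\binom{Y_0(t)}{k}\lambda_{Y_0(t),k}.
\]

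To bound the sum I would use the elementary inequality $(k+1)^2\leq 3k(k-1)+9$ for $k\geq 2$. The quadratic piece can be evaluated exactly using the classical identity $\sum_{k=2}^{b}k(k-1)\binom{b}{k}\lambda_{b,k}=b(b-1)\Lambda([0,1])$, which follows from the fact that every fixed pair of blocks coalesces at total rate $\int_0^1\Lambda(dp)=\Lambda([0,1])$. The constant piece contributes the total merger rate $\lambda_b$, which is $O(b^\alpha)$ by Lemma~\ref{lemma:total_rates}. Substituting $b=Y_0(t)\leq n$ gives $\sum_k(k+1)^2\binom{Y_0(t)}{k}\lambda_{Y_0(t),k}\leq Cn^2$, and therefore $\E[(dX_r(t))^2\mid\mathcal F_t]\leq C\tau_n$, uniformly in $t$ and $r$, which suffices for the claimed bound. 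No substantive obstacle arises; the only step needing care is the combinatorial bookkeeping establishing $|dY_r|\leq k+1$, where the key point is that upward jumps in $Y_r$ occur only under the very constrained pattern of a binary merger of a singleton with an $(r-1)$-caterpillar.
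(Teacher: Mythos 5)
Your proof is correct, and in the decisive step it takes a genuinely different (and in fact safer) route than the paper. The paper first reduces to $r=0$ via the observation that a jump of $X_r$ is dominated in absolute value by the corresponding jump of $X_0$, namely $(k-1)/n$ for a $k$-merger, and then bounds $\sum_k (k-1)^2\binom{b}{k}\lambda_{b,k}$ by appealing to Lemma~\ref{lemma:square_rate}. Your uniform jump bound $|{\rm d}Y_r|\leq k+1$ plays the same role as that reduction (your constant is slightly worse but the order is identical), but you then control the quadratic sum through the exact consistency identity $\sum_{k=2}^{b}k(k-1)\binom{b}{k}\lambda_{b,k}=b(b-1)\Lambda([0,1])$, which follows from $k(k-1)\binom{b}{k}=b(b-1)\binom{b-2}{k-2}$ and the binomial theorem. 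This is worth highlighting for two reasons. First, your identity shows the sum is exactly of order $b^{2}$, whereas Lemma~\ref{lemma:square_rate} as stated normalises by $b^{-2(\alpha-1)}$ for $\alpha\in(1,2)$; since $2(\alpha-1)<2$ there, that normalisation cannot be correct (a direct computation from Lemma~\ref{lemma:zeta_convergence} using $\Gamma(k-\alpha)/\Gamma(k-1)\sim k^{1-\alpha}$ also gives $\Theta(b^{2})$), so your argument quietly repairs a weak link in the paper's chain of lemmas. Second, your final bound $C\tau_n=O(n^{1-\alpha})$ is sharper than, and trivially implies, the stated $Cn^{3-\alpha}$; more importantly it tends to $0$, which is what the subsequent Doob and Burkholder--Davis--Gundy argument actually needs (the exponent $3-\alpha$ in the statement is positive and must be a sign error, presumably for $\alpha-3$ or $1-\alpha$). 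The only cosmetic discrepancy is your time-change constant $\alpha/(n^{\alpha-1}\Gamma(2-\alpha))$ versus the $\alpha-1$ appearing in the paper's rate~\eqref{eq:rate}; this is irrelevant to the order of the bound.
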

\begin{proof}
  Note that for each $t \geq 0$, $\eps>0$ and $r \geq 0$,
  \[
    \left|(X_r(t+\eps)-X_r(t))\right| \leq |X_0(t+\eps)-X_0(t)|
  \]
  since the change in the number of $r$-caterpillars is at most the change in the number of blocks.
  Hence we see that
  \begin{equation}\label{eq:square_by_zero}
    \sup_{t \geq 0, r \geq 0}\E[({\rm d}X_r(t))^2|\mathcal F_t] \leq \sup_{t \geq 0}\E[({\rm d}X_0(t))^2|\mathcal F_t].
  \end{equation}

  Now, $X_0(t)$ decreases by $(k-1)/n$ at rate given by~\eqref{eq:rate}.
  Hence by Lemma~\ref{lemma:square_rate} in the case when $\alpha\in (1,2)$, and trivially when $\alpha=2$, there exists a constant $C>0$, which is independent of $t$, such that
  \[
    \E[({\rm d}X_0(t))^2|\mathcal F_t] = \frac{1}{n^{1+\alpha}}\sum_{k=2}^{nX_0(t)} (k-1)^2 \binom{n X_0(t)}{k}\lambda_{n X_0(t), k} \leq C n^{3-\alpha}.
  \]
  Plugging this into~\eqref{eq:square_by_zero} finishes the proof.
\end{proof}

Now we begin to show Theorem~\ref{thm:cherries_at} by using the preceding two lemmas.
It is important to observe that for each $r \geq 0$, Doob--Mayer decomposition gives that
\[
  M_r(t):=X_r(t)-\int_0^t\E[{\rm d}X_r(s)|\mathcal F_s]{\rm d}s \qquad t \geq 0
\]
is a martingale with quadratic variation
\[
  [M_r(t)]_t=\int_0^t \E[({\rm d}X_r(s))^2|\mathcal F_s] {\rm d}s.
\]
We will show Theorem~\ref{thm:cherries_at} by induction on $r$.
We begin by proving the base case $r=0$.

\begin{lemma}\label{lemma:X_0}
  For each $T>0$ we have that almost surely,
  \[
    \lim_{n\to\infty}\sup_{t \leq T}\big|X_0(t)-(1+t)^{-\frac{1}{\alpha-1}}\big|=0.
  \]
\end{lemma}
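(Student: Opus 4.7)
The plan is to apply the Doob--Meyer decomposition, control each piece via Lemmas~\ref{lemma:drift} and~\ref{lemma:square_term}, and then match $X_0$ to the ODE $\dot y = -y^\alpha/(\alpha-1)$ with $y(0)=1$, whose unique solution is $y_0(t)=(1+t)^{-1/(\alpha-1)}$ (verify by differentiation: $\dot y_0 = -\frac{1}{\alpha-1}(1+t)^{-\alpha/(\alpha-1)} = -y_0^\alpha/(\alpha-1)$). Note that $X_0(0)=1$ since $\Pi$ starts from $n$ singletons, and the sample paths of $X_0$ are monotone decreasing in $[0,1]$, so tightness is automatic.

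First I write
\[
  X_0(t) = 1 + A_0(t) + M_0(t), \qquad A_0(t) := \int_0^t \E[{\rm d}X_0(s)\mid\F_s]\,{\rm d}s,
\]
with $M_0$ the compensated martingale. By Lemma~\ref{lemma:square_term} the expected quadratic variation $\E[[M_0]_T]$ vanishes as $n\to\infty$, so Doob's $L^2$ maximal inequality together with a Borel--Cantelli / higher-moment argument yields $\sup_{t\le T}|M_0(t)|\to 0$ almost surely. In parallel, Lemma~\ref{lemma:drift} applied at $r=0$ gives $\sup_{s\ge 0}\bigl|\E[{\rm d}X_0(s)\mid\F_s] + X_0(s)^\alpha/(\alpha-1)\bigr|\to 0$ a.s., hence
\[
  \sup_{t\le T}\Bigl|A_0(t) + \frac{1}{\alpha-1}\int_0^t X_0(s)^\alpha\,{\rm d}s\Bigr| \to 0 \quad \text{a.s.}
\]

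Setting $E_n(t) := X_0(t) - y_0(t)$ and subtracting the two integral identities (the one for $X_0$ above and $y_0(t) = 1 - \frac{1}{\alpha-1}\int_0^t y_0(s)^\alpha\,{\rm d}s$), I obtain
\[
  E_n(t) = -\frac{1}{\alpha-1}\int_0^t \bigl(X_0(s)^\alpha - y_0(s)^\alpha\bigr)\,{\rm d}s + R_n(t),
\]
where $R_n$ collects $M_0$ and the drift-approximation error and satisfies $\sup_{t\le T}|R_n(t)|\to 0$ a.s.\ by the previous step. Since $x\mapsto x^\alpha$ is $\alpha$-Lipschitz on $[0,1]$ and $X_0(s),y_0(s)\in[0,1]$, Gronwall's inequality delivers
\[
  \sup_{t\le T}|E_n(t)| \le \bigl(\sup_{t\le T}|R_n(t)|\bigr)\,e^{\alpha T/(\alpha-1)} \to 0 \quad \text{a.s.},
\]
which is exactly the claim.

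The main technical point is the upgrade from the $L^2$ control of $M_0$ to almost-sure uniform convergence. In the regime $\alpha\in(1,2)$ the variance rate decays like $n^{\alpha-3}$ with $\alpha-3<-1$, so the probability tail is summable in $n$ and Borel--Cantelli applies directly; in the borderline Kingman case $\alpha=2$ the variance rate is only $O(1/n)$, and one must either invoke a fourth-moment Burkholder--Davis--Gundy bound or exploit the monotonicity of $X_0$ combined with pointwise convergence on a dense subset of $[0,T]$ (continuity of the limit $y_0$ then upgrades pointwise to uniform via a Dini-type argument). The remainder is a routine ODE comparison.
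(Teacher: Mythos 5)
Your proposal is correct and follows essentially the same route as the paper: the Doob--Meyer decomposition, the martingale part killed via the quadratic-variation bound of Lemma~\ref{lemma:square_term} together with Doob's maximal inequality, the drift matched to $-y^\alpha/(\alpha-1)$ via Lemma~\ref{lemma:drift}, and a Gronwall comparison with the explicit ODE solution. If anything you are more careful than the paper about upgrading the uniform convergence from in-probability to almost-sure: your observation that the variance rate $n^{\alpha-3}$ is summable for $\alpha\in(1,2)$ (so Borel--Cantelli applies), with a separate argument needed at $\alpha=2$, is a welcome clarification of a step the paper dispatches by simply invoking bounded convergence.
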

\begin{proof}
  Fix $T>0$ and for $t \in [0,T]$ let $f(t)=|X_0(t)-x_0(t)|$ where $x_0(t)=(1+t)^{-\frac{1}{\alpha-1}}$.
  Now, $x_0(t)$ solves the integral equation
  \[
    x_0(t)=\int_0^t \frac{x_0(s)^\alpha}{\alpha-1}\,{\rm d}s \qquad t >0
  \]
  with the initial condition $x_0(0)=1$.
  Thus we see that
  \begin{align}\label{eq:G_0_apply_gronwall}
    f(t) & \leq |M_0(t)| + \int_0^t\left|\frac{x_0(s)^\alpha}{\alpha-1}-\E[{\rm d} X_0(s)|\mathcal F_s]\right|\,{\rm d} s                                                   \nonumber                                 \\
    & \leq |M_0(t)| + \int_0^t\left|\frac{X_0(s)^\alpha}{\alpha-1}-\E[{\rm d} X_0(s)|\mathcal F_s]\right|\,{\rm d} s + \frac{1}{\alpha-1}\int_0^t\left|x_0(t)^\alpha - X_0(t)^\alpha\right|\,{\rm d} s \nonumber \\
    & \leq |M_0(t)| + \int_0^t\left|\frac{X_0(s)^\alpha}{\alpha-1}-\E[{\rm d} X_0(s)|\mathcal F_s]\right|\,{\rm d} s + \frac{1}{\alpha-1}\int_0^t f(s)\,{\rm d} s
  \end{align}
  where in the final inequality we have used the fact that for $\alpha>1$ and $x,y \in [0,1]$, $|x^\alpha-y^\alpha|\leq 2|x-y|$.

  Using Gronwall's inequality and taking supremums we see that
  \[
    \sup_{t\leq T}|X_0(t)-x_0(t)|\leq \left(\sup_{t \leq T}|M_0(t)| + T\sup_{t \leq T}\left|\frac{X_0(s)^\alpha}{\alpha-1}-\E[{\rm d} X_0(s)|\mathcal F_s]\right|\right) e^{\frac{T}{\alpha-1}}
  \]
  Applying Doob's $L^2$-inequality and Burkholder-Davis-Grundy inequality we see that
  \[
    \lim_{n\to\infty}\sup_{t \leq T}|M_0(t)|=0
  \]
  in probability.
  Using this, together with Lemma~\ref{lemma:drift} shows convergence in probability and using bounded convergence finishes the result.
\end{proof}

Now we can show Theorem~\ref{thm:cherries_at}.

\begin{proof}[Proof of Theorem~\ref{thm:cherries_at}]
  Notice first that $x_r(t)$ given in the statement of Theorem~\ref{thm:cherries_at} satisfies
  \[
    \frac{{\rm d}}{{\rm d}t} x_r(t) = \begin{cases}
      -\frac{x_0(t)}{\alpha-1}&\text{if }r=0\\
      -\frac{\alpha}{\alpha-1}x_1(t)x_0(t)^{\alpha-1}&\text{if }r=1\\
      \alpha\frac{x_1(t)^2}{2x_0(t)^{2-\alpha}}-\frac{\alpha}{\alpha-1}x_2(t)x_0(t)^{\alpha-1}&\text{if }r=2\\
      \alpha\frac{x_{r-1}(t)x_1(t)}{x_0(t)^{2-\alpha}}-\frac{\alpha}{\alpha-1}x_r(t)x_0(t)^{\alpha-1}&\text{if }r\geq 3
    \end{cases}
  \]
  with the intial condition $x_0(0)=x_1(0)=1$ and $x_r(0)=0$ for $r \geq 2$.

  We proceed by induction on $r$.
  Lemma~\ref{lemma:X_0} shows the case $r=0$.
  Fix $T>0$ and suppose that there exists an $r\geq 0$ such that for every $r' \leq r$,
  \[
    \lim_{n\to\infty}\sup_{t \leq T}\big|X_{r'}(t)-x_{r'}(t)\big|=0
  \]
  in almost surely.

  Let us consider the case when $r \geq 2$, the other cases follow similarly.
  For $t \in [0,T]$ let $f(t)=|X_{r+1}(t)-x_{r+1}(t)|$, then similarly to~\eqref{eq:G_0_apply_gronwall},
  \begin{align}\label{eq:gronwall_again}
    f(t)\leq & |M_{r+1}(t)| + \int_0^t\left|\xi_{r+1}(s)-\E[{\rm d} X_{r+1}(s)|\mathcal F_s]\right|\,{\rm d} s + \alpha\int_0^t \left|\frac{X_{r}(s)X_1(s)}{X_0(s)^{2-\alpha}}-\frac{x_{r}(s)x_1(s)}{x_0(s)^{2-\alpha}}\right|\,{\rm d} s \nonumber \\
    & + \frac{\alpha}{\alpha-1}\int_0^t\left|X_{r+1}(s)X_0(s)^{\alpha-1}-x_{r+1}(s)x_0(s)^{\alpha-1}\right|\,{\rm d}s.
  \end{align}
  Now since $x_0(s),X_{r+1}(s)\leq 1$ we have that
  \begin{align*}
    |X_{r+1}(s)X_0(s)^{\alpha-1}-x_{r+1}(s)x_0(s)^{\alpha-1}| & \leq x_0(s)^{\alpha-1}|X_{r+1}(s)-x_{r+1}(s)| + X_{r+1}(s)|X_0(s)^{\alpha-1}-x_0(s)^{\alpha-1}| \\
    & \leq |X_{r+1}(s)-x_{r+1}(s)| + |X_0(s)^{\alpha-1}-x_0(s)^{\alpha-1}|.
  \end{align*}
  Plugging this into~\eqref{eq:gronwall_again}, applying Gronwall's inequality and taking supremums, we see that
  \begin{align*}
    \sup_{t \leq T}|X_{r+1}(t)-x_{r+1}(t)|\leq & \sup_{t \leq T}\left(|M_{r+1}(t)| + T\left|\xi_{r+1}(t)-\E[{\rm d} X_{r+1}(t)|\mathcal F_t]\right| + \frac{\alpha T}{\alpha-1}|X_0(t)^{\alpha-1}-x_0(t)^{\alpha-1}| \right. \nonumber \\
    & \qquad\qquad \left.+\alpha T \left|\frac{X_{r}(t)X_1(t)}{X_0(t)^{2-\alpha}}-\frac{x_{r}(t)x_1(t)}{x_0(t)^{2-\alpha}}\right|\right)e^{\frac{\alpha T}{\alpha-1}}.
  \end{align*}
  Applying Doob's $L^2$-inequality and Burkholder-Davis-Grundy inequality we see that
  \[
    \lim_{n\to\infty}\sup_{t \leq T}|M_{r+1}(t)|=0
  \]
  in probability.
  Using Lemma~\ref{lemma:drift} we have
  \[
    \lim_{n \to\infty}\sup_{t \leq T}\left|\xi_{r+1}(t)-\E[{\rm d} X_{r+1}(t)|\mathcal F_t]\right|=0
  \]
  in probability.
  The rest of the terms converge by the induction hypothesis and we see that
  \[
    \lim_{n\to\infty}\sup_{t \leq T}|X_{r+1}(t)-x_{r+1}(t)|=0
  \]
  in probability.
  Using bounded convergence gives that the above holds almost surely which concludes the proof.
\end{proof}

\section{Proof of Theorem~\ref{thm:cherry_conv}}\label{sec:proof}

For $t \geq 0$ and $r \in \{2,\dots\}$ let $Y^\uparrow_r(t)$ be the number of $r$-caterpillars seen up to time $t$, that is, the number of $r$-caterpillars $B\subset \{1,\dots,n\}$ such that $B$ is a block of $\Pi(s)$ for some $s\leq t$.
Notice that $t\mapsto Y^\uparrow_r(t)$ is increasing and $Y^\uparrow(\infty)$ is the total number of $r$-caterpillars.

Similar to before, for each $r \geq 0$ and $t \geq 0$, let
\[
  X^\uparrow_r(t) =\begin{cases}
    \frac{1}{n}Y_r\left(t \frac{\alpha}{n^{\alpha-1}\Gamma(2-\alpha)}\right) &\text{if }\alpha\in(1,2)\\
    \frac{1}{n} Y_r(tn^{-1})&\text{if } \alpha=2.
  \end{cases}
\]

Now, $Y^\uparrow_r(t)$ increases by $1$ whenever an $(r-1)$-caterpillar at time $t$ with a singleton which happens at rate
\[
  \lambda_{Y_0(t),2} Y_{r-1}(t)Y_1(t).
\]
Thus we see that
\[
  \E[{\rm d}X^\uparrow_r(t)|\mathcal F_t]=\begin{cases}
    n^{2-\alpha} \frac{X_1(t)(X_1(t)-1/n)}{2} \lambda_{nX_0(t),2}&\text{if }r=2\\
    n^{2-\alpha} X_{r-1}(t)X_1(t) \lambda_{nX_0(t),2}&\text{if }r\geq 3
  \end{cases}
\]
and
\[
  \E[({\rm d}X^\uparrow_r(t))^2|\mathcal F_t] = n^{-1}\E[{\rm d}X^\uparrow_r(t)|\mathcal F_t].
\]
Hence by Lemma~\ref{lemma:lambda_unif} and Theorem~\ref{thm:cherries_at}, we see that for $r=2$
\[
  \lim_{n\to\infty}\sup_{t \leq T}\left|\E[{\rm d}X^\uparrow_r(t)|\mathcal F_t]-\alpha\frac{x_1(t)^2}{2x_0(t)^{2-\alpha}}\right|=0
\]
in probability, and for $r \geq 3$,
\[
  \lim_{n\to\infty}\sup_{t \leq T}\left|\E[{\rm d}X^\uparrow_r(t)|\mathcal F_t]-\alpha\frac{x_{r-1}(t)x_1(t)}{x_0(t)^{2-\alpha}}\right|=0
\]
in probability.
Using a similar argument as in the proof of Theorem~\ref{thm:cherries_at} we see that
\[
  \lim_{n\to\infty}\sup_{t \leq T}\left|X^\uparrow_r(t)-x^\uparrow_r(t)\right|=0
\]
almost surely where $x^\uparrow_r(t)$ is the solution to
\[
  \frac{{\rm d}}{{\rm d}t}x^\uparrow_r(t)=\begin{cases}
    \alpha\frac{x_1(t)^2}{2x_0(t)^{2-\alpha}}& \text{if }r=2\\
    \alpha\frac{x_{r-1}(t)x_1(t)}{x_0(t)^{2-\alpha}}& \text{if }r\geq 3
  \end{cases}
\]
with the initial condition $x^\uparrow_r(0)=0$.
Using the explicit formula for $x_r(t)$ in Theorem~\ref{thm:cherries_at},
\begin{align*}
  x^\uparrow_r(\infty)&=\frac{\alpha}{2(r-2)!}\int_0^\infty (1+t)^{-\alpha/(\alpha-1)}\left(\frac{\alpha t}{1+t}\right)^{r-2} (1+t)^{(2-\alpha)/(\alpha-1)}(1+t)^{-\alpha/(\alpha-1)}\,{\rm d}t\\
                      &=\frac{\alpha^{r-1}}{2(r-2)!}\int_0^\infty \frac{t^{r-2}}{(1+t)^{r + \alpha/(\alpha-1)}}\,{\rm d}t\\
                      &=\frac{\alpha^{r-1}}{2(r-2)!}\int_0^1 u^{r-2}(1+u)^{\alpha/(\alpha-1)}\,{\rm d}u\\
                      &=\frac{\alpha^{r-1}}{2}\frac{\Gamma(1+\alpha/(\alpha-1))}{\Gamma(r+\alpha/(\alpha-1))}
\end{align*}
where in the third equality we have used the substitution $u=t/(1+t)$ and in the final equality we have used the definition of a Beta function.

Now recall that $\xi_r^{(n)}$ denotes the total number of $r$-caterpillars and so $\xi_r^{(n)}/n = X^\uparrow_r(\infty)$.
Since $t \mapsto X^\uparrow_r(t)$ and $t\mapsto x^\uparrow_r(t)$ are monotonic, we see that
\[
  \liminf_{n\to\infty} \frac{1}{n}\xi_r^{(n)} \geq \lim_{t\uparrow \infty} \liminf_{n\to\infty} X^\uparrow_r(t) = x^\uparrow_r(\infty).
\]

Now let $T_\eps:=\inf\{t \geq 0: X_0(t)<\eps\}$ and let $t_\eps:=\inf\{t \geq 0: (1+t)^{-\frac{1}{\alpha-1}}<\eps\}$.
By Theorem~\ref{thm:cherries_at}, for $n$ large enough, $T_\eps \leq t_\eps + \eps$ almost surely.
Further we have that $X^\uparrow_r(\infty)-X^\uparrow_r(T_\eps) \leq X_0(T_\eps)<\eps$ and so for large $n$,
\[
  X^\uparrow_r(\infty)\leq \eps + X^\uparrow_r(T_\eps)\leq \eps+X^\uparrow_r(t_\eps + \eps)
\]
where again we have used the fact that $t\mapsto X^\uparrow_r(t)$ is increasing. 
Taking limits on both sides we see that
\[
  \limsup_{n\to\infty}\frac{1}{n}\xi_r^{(n)} \leq \eps + x^\uparrow_r(t_\eps + \eps)
\]
almost surely.
Taking the limit as $\eps\downarrow 0$ shows that $\limsup_{n\to\infty}\xi_r^{(n)} \leq x^\uparrow_r(\infty)$ which concludes the proof.

\section*{Acknowledgements}
I would like to thank Andreas Kyprianou and Caroline Colijn for suggesting this problem to me, and discussing it with me at various stages.
This work was supported by EPSRC grants EP/L002442/1 and EP/P003818/1.

\printbibliography

\end{document}